\newtheorem{theorem}{Theorem}
\newtheorem{proposition}{Proposition}[section]
\newtheorem{lemma}[proposition]{Lemma}
\DeclareMathOperator{\tr}{tr}
\DeclareMathOperator{\Int}{Int}
\DeclareMathOperator{\sign}{sign}
\newcommand{\iii}{\mathtt{i}}
\begin{document}
\title[Approximation of the $p$-radius]{Fast approximation of the $p$-radius, matrix pressure or generalised Lyapunov exponent for positive and dominated matrices}

\author{Ian D. Morris}

\maketitle

\begin{abstract}
If $A_1,\ldots,A_N$ are real $d \times d$ matrices then the \emph{$p$-radius}, \emph{generalised Lyapunov exponent} or \emph{matrix pressure} is defined to be the asymptotic exponential growth rate of the sum $\sum_{i_1,\ldots,i_n=1}^N \|A_{i_n}\cdots A_{i_1}\|^p$, where $p$ is a real parameter. Under its various names this quantity has been investigated for its applications to topics including wavelet regularity and refinement equations, fractal geometry and the large deviations theory of random matrix products. In this article we present a new algorithm for computing the $p$-radius under the hypothesis that the matrices are all positive, or more generally under the hypothesis that they satisfy a weaker condition called \emph{domination}. This algorithm is based on interpreting the $p$-radius as the leading eigenvalue of a trace-class operator on a Hilbert space and estimating that eigenvalue via approximations to the Fredholm determinant of the operator. In this respect our method is closely related to the work of Z.-Q. Bai and M. Pollicott on computing the top Lyapunov exponent of a random matrix product. For pairs of positive matrices of low dimension our method yields substantial improvements over existing methods.
\end{abstract}


\section{Introduction} 

If $(A_1,\ldots,A_N)$ is a tuple of real $d \times d$ matrices and $p\in \mathbb{R}$ a real parameter, the limit
\begin{equation}\label{eq:def}\varrho_p(A_1,\ldots,A_N):=\lim_{n \to \infty} \left(\sum_{i_1,\ldots,i_n=1}^N \left\|A_{i_n}\cdots A_{i_1}\right\|^p \right)^{\frac{1}{n}}\end{equation}
exists by applying Fekete's subadditivity lemma to the sequence 
\[a_n(p):=\log \left(\sum_{i_1,\ldots,i_n=1}^N \left\|A_{i_n}\cdots A_{i_1}\right\|^p \right)\]
if $p\geq 0$, or to the sequence $-a_n(p)$ if $p<0$. The quantity \eqref{eq:def}, modulo some trivial variations in its definition, has been studied independently in at least three different contexts and literatures: under the name of \emph{generalised Lyapunov exponent} the quantity $\log(N^{-1}\varrho_p(A_1,\ldots,A_N))$ has been studied for $p \in \mathbb{R}$ in \cite{CrPaVu88,Va10} where its investigation is motivated by the large deviations theory of random matrix products in statistical mechanics; under the name of \emph{matrix pressure}, the quantity $\varrho_p(A_1,\ldots,A_N)$ has been investigated for $p \geq 0$ in the fractal geometry literature in view of its applications to the dimension of self-similar and self-affine limit sets (\cite{FeLa02,FeLoSh19,FeSh14,Mo16,PoVy15}); and in the joint spectral radius literature, the quantity $N^{-1/p}\varrho_p(A_1,\ldots,A_N)^{1/p}$ has been investigated for $p \geq 1$ in connection with its applications to wavelet regularity \cite{CaHeMo04,LaWa95,Wa96} and the control theory of discrete linear inclusions \cite{JuPr10,OgMa13}. Across all three literatures there has arisen the problem of computing or estimating the quantity $\varrho_p(A_1,\ldots,A_N)$ --  as may be seen for example in \cite{JuPr11,Mo16,OgPrJu16,PoVy15,Pr97,StTh18,Va10} -- and it is with this that the present article is concerned. The principal result of this article is a new algorithm for the computation of $\varrho_p(A_1,\ldots,A_N)$ in the case where the matrices $A_1,\ldots,A_N$ are positive and $p$ is an arbitrary real number. More generally, our method extends to the case where the matrices $A_1,\ldots,A_N$ strictly preserve a cone or \emph{multicone}.

\section{Statement of main result}

In order to state our result let us present the definition of a multicone. Let us say that a \emph{cone} in $\mathbb{R}^d$ is a set $\mathcal{K} \subset \mathbb{R}^d$ which is closed, convex, has nonempty interior, satisfies $\lambda\mathcal{K}=\lambda\mathcal{K}$ for all real $\lambda>0$ and satisfies $\mathcal{K}\cap-\mathcal{K}=\{0\}$. A \emph{multicone} will be a tuple $(\mathcal{K}_1,\ldots,\mathcal{K}_m)$ of cones in $\mathbb{R}^d$ such that for some nonzero vector $w \in \mathbb{R}^d$ we have $\langle u,w\rangle>0$ for all nonzero $u \in \bigcup_{j=1}^m \mathcal{K}_j$, and such that $\mathcal{K}_i \cap \mathcal{K}_j=\{0\}$ for distinct $i,j \in \{1,\ldots,m\}$. The vector $w$ is called the \emph{transverse-defining vector} of the multicone. We say that a matrix $A \in M_d(\mathbb{R})$ strictly preserves a cone $\mathcal{K}$ if $A(\mathcal{K}\setminus \{0\}) \subseteq \mathrm{Int} \,\mathcal{K}$, and we say that $A$ strictly preserves a multicone $(\mathcal{K}_1,\ldots,\mathcal{K}_m)$ if for every $i=1,\ldots,m$ we have $A(\mathcal{K}_i\setminus \{0\}) \subseteq \Int \mathcal{K}_j \cup (-\Int \mathcal{K}_j)$ for some $j \in \{1,\ldots,m\}$ depending on $i$. If $A$ strictly preserves a multicone then a simple pigeonhole argument demonstrates that some power of $A$ strictly preserves a cone, which implies that $A$ has a simple leading eigenvalue (which might be either positive or negative). We say that $(A_1,\ldots,A_N)\in M_d(\mathbb{R})^N$ strictly preserves a multicone $(\mathcal{K}_1,\ldots,\mathcal{K}_m)$ if every $A_i$ strictly preserves that multicone. We say that $(A_1,\ldots,A_N)$ is \emph{multipositive} if there exists a multicone which is strictly preserved by $(A_1,\ldots,A_N)$. The property of multipositivity admits characterisations which do not overtly refer to cones or multicones: for example, if $(A_1,\ldots,A_N) \in M_d(\mathbb{R})^N$ is a tuple of invertible matrices then the multipositivity of $(A_1,\ldots,A_N)$ is equivalent to the condition
\begin{equation}\label{eq:dom}\limsup_{n \to \infty} \frac{1}{n}\log \max\left\{\frac{\sigma_2(A_{i_n}\cdots A_{i_1})}{\sigma_1(A_{i_n}\cdots A_{i_1})} \colon 1 \leq i_1,\ldots,i_n \leq N\right\}<0\end{equation}
where $\sigma_k(A)$ denotes the $k^{\mathrm{th}}$ singular value of the matrix $A$, see for example \cite{BaVi12,BoGo09,Mo18}. The condition \eqref{eq:dom} above is sometimes called \emph{$1$-domination} or simply \emph{domination} and has been explored in some detail in the dynamical systems literature \cite{AvBoYo10,BoGo09}; its applications to certain numerical invariants of sets of matrices have been investigated in such works as \cite{BoMo15,BrZe18}.

For each $N \geq 1$ we let $\Sigma_N^*$ denote the set of all finite sequences $\mathtt{i}=(i_1,\ldots,i_n)$ such that $i_1,\ldots,i_n$ are integers between $1$ and $N$. If a tuple of matrices $(A_1,\ldots,A_N)\in M_d(\mathbb{R})^N$ is understood, given $\mathtt{i}=(i_1,\ldots,i_n) \in \Sigma_N^*$ we define $A_\iii := A_{i_n}\cdots A_{i_1}.$
If $\mathtt{i}=(i_1,\ldots,i_n) \in \Sigma_N^*$ then we write $|\iii|:=n$ and call this the length of $\iii$. Finally we let $\rho(A)$ denote the spectral radius of the matrix $A$, and we let $\lambda_1(A),\ldots,\lambda_d(A)$ denote the eigenvalues of $A$ listed in decreasing order of absolute value. Since our matrices $A$ will always strictly preserve a multicone the largest eigenvalue of $A$ will always be unique and the definition of $\lambda_1(A)$ unambiguous.

We may now state the principal result of this article, which is the following:
\begin{theorem}\label{th:main}
Let $(A_1,\ldots,A_N) \in M_d(\mathbb{R})^N$ be multipositive, where $N,d \geq 2$, and let $p \in \mathbb{R}$. For every $n \geq 1$ define
\[t_n:=\sum_{|\iii|=n} \rho(A_\iii)^p \prod_{j=2}^d \left(1-\frac{\lambda_j(A_\iii)}{\lambda_1(A_\iii)}\right)^{-1} = \sum_{|\iii|=n}\frac{\lambda_1(A_\iii)^{d-1}\rho(A_\iii)^p}{p_{A_\iii}'(\lambda_1(A_\iii))}\]
where $p_B(x):=\det (xI-B)$ denotes the characteristic polynomial of the matrix $B$ and $p_B'(x_0)$ its first derivative evaluated at $x_0$. Define $a_0:=1$ and
\begin{align*}a_n&:= \frac{(-1)^n}{n!}\det\begin{pmatrix}
t_1& n -1&  0&\cdots &0 &0\\
t_2&t_1&n-2 &\cdots &0 &0\\
t_3&t_2&t_1 &\ddots &0 &0\\
\vdots & \vdots & \vdots & \ddots  &\ddots& \vdots\\
t_{n-1} &t_{n-2}&t_{n-3}&\cdots &t_1 &1\\
t_n &t_{n-1}&t_{n-2}&\cdots &t_2 &t_1
\end{pmatrix}\\
&=\sum_{k=1}^n \frac{(-1)^k}{k!}\sum_{\substack{n_1,\ldots,n_k \geq 1\\n_1+\cdots +n_k=n}} \prod_{\ell=1}^k \frac{t_{n_\ell}}{n_\ell}\end{align*}
for every $n\geq 1$. Then for all sufficiently large $n$ there exists a smallest positive real root $r_n>0$ of the polynomial $\sum_{k=0}^n a_kx^k$, and there exist constants $K,\gamma>0$ such that for all large enough integers $n$
\begin{equation}\label{eq:order}\left|\varrho_p(A_1,\ldots,A_N)-\frac{1}{r_n}\right| \leq K\exp\left(-\gamma n^{\frac{d}{d-1}}\right).\end{equation}
\end{theorem}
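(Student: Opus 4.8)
The plan is to realise $\varrho_p(A_1,\ldots,A_N)$ as the reciprocal of the leading zero of a Fredholm determinant and to use the known super-exponential convergence of its Taylor approximants. First I would set up the transfer operator. Because $(A_1,\ldots,A_N)$ strictly preserves a multicone, each $A_i$ acts projectively on (a neighbourhood of) the image of the multicone in $\mathbb{P}^{d-1}(\mathbb{R})$ as a uniformly contracting holomorphic map once complexified; passing to a power if necessary we may assume each branch is a contraction on a fixed complex neighbourhood $U$ of the relevant piece of projective space. This gives rise to a weighted composition (Ruelle transfer) operator $\mathcal{L}$ acting on a Bergman-type Hilbert space of holomorphic functions on $U$, with weights built from $\rho(A_\iii)^p$ and the Jacobian-type factors $\prod_{j\ge 2}(1-\lambda_j/\lambda_1)^{-1}$ that appear in $t_n$. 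The content of the first step is to verify that this operator is well defined, trace-class (indeed with singular values decaying like $\exp(-cn^{d/(d-1)})$ because the space of holomorphic functions on a domain in $\mathbb{C}^{d-1}$ has that entropy), and that its spectral radius equals $\varrho_p(A_1,\ldots,A_N)$ — the latter via the standard identification $\operatorname{tr}\mathcal{L}^n=\sum_{|\iii|=n}(\text{weight})/\det(I-DF_\iii)$ at the fixed point, which is exactly $t_n$ after the linear-algebra identity $\det(I-DF_\iii)=\prod_{j\ge2}(1-\lambda_j(A_\iii)/\lambda_1(A_\iii))$ is checked.

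Next I would connect the $a_n$ to the Fredholm determinant. The Plemelj–Smithies formula expresses $\det(I-z\mathcal{L})=\sum_{n\ge0}a_n z^n$ where the $a_n$ are precisely the Newton-identity combinations of the traces $\operatorname{tr}\mathcal{L}^k=t_k$ displayed in the theorem — the determinant of the bidiagonal-plus-column matrix is one classical closed form for that combination, and the sum over compositions $n_1+\cdots+n_k=n$ is another. So $\sum_{k=0}^n a_k x^k$ is the $n$-th Taylor polynomial of $x\mapsto\det(I-x\mathcal{L})$. Since $\mathcal{L}$ is positive (it preserves a cone of functions by the Perron–Frobenius/Birkhoff argument, using strict multicone preservation), its leading eigenvalue $\varrho_p$ is a simple positive eigenvalue, so $x=1/\varrho_p$ is a simple zero of $\det(I-x\mathcal{L})$ and is the zero of smallest modulus; in particular it is a genuine sign change, which is what will let the truncated polynomial have a nearby real root.

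For the final quantitative step I would invoke the standard bound for Fredholm determinants of operators on spaces of holomorphic functions: if the singular values satisfy $\mu_m(\mathcal{L})\le C\theta^{m^{1/(d-1)}}$ for some $\theta<1$, then the entire function $\det(I-x\mathcal{L})$ has order $0$ (more precisely finite order $d-1$ as a function of a rescaled variable, but what matters is the decay) and its Taylor coefficients satisfy $|a_n|\le K\exp(-\gamma n^{d/(d-1)})$, whence $|\det(I-x\mathcal{L})-\sum_{k=0}^n a_k x^k|\le K'\exp(-\gamma' n^{d/(d-1)})$ uniformly on compacts. A Rouché / Hurwitz argument then shows that for $n$ large the truncation has a simple real zero $r_n$ within $\exp(-\gamma'' n^{d/(d-1)})$ of $1/\varrho_p$, and that this is its smallest positive zero; stability of the simple zero converts the coefficient bound into the bound \eqref{eq:order} on $|\varrho_p-1/r_n|$. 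The main obstacle I expect is the first step: producing a single complex domain $U$ on which all the composition operators are simultaneously holomorphic contractions with the right weights, and proving the precise $\exp(-cn^{d/(d-1)})$ singular-value decay in dimension $d-1>1$ (dimension one is classical Bai–Pollicott; the higher-dimensional entropy count and the care needed when $p<0$ or when the multicone has several components are where the real work lies). Everything after that is the by-now-standard determinantal machinery.
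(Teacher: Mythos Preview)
Your proposal is correct and follows essentially the same route as the paper: realise $\varrho_p$ as the spectral radius of a trace-class transfer operator on a Bergman space, identify the $a_n$ via the Plemelj--Smithies formula as the Taylor coefficients of $\det(I-z\mathscr{L})$, bound $|a_n|$ by $\exp(-\gamma n^{d/(d-1)})$ from the singular-value decay, and finish with Rouch\'e plus a derivative bound to locate the smallest positive real root of the truncation. The paper does not build the operator from scratch but quotes it wholesale from \cite{Mo18} (Theorem~\ref{th:opter} here), which supplies the domain, the singular-value estimate, the trace formula, and the simplicity of the top eigenvalue in one package --- exactly the step you flag as the ``main obstacle''.

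One slip to fix: in your first paragraph you state the singular values decay like $\exp(-cn^{d/(d-1)})$, but in your third paragraph you (correctly) write $\mu_m(\mathcal{L})\le C\theta^{m^{1/(d-1)}}$. The latter is right --- the entropy count on a domain in $\mathbb{C}^{d-1}$ gives $\mathfrak{s}_n(\mathscr{L})\lesssim\exp(-\gamma n^{1/(d-1)})$, and it is only after taking the product $\prod_{k=1}^n\mathfrak{s}_{i_k}$ over $i_1<\cdots<i_n$ (and summing $\sum_{k\le n}k^{1/(d-1)}\gtrsim n^{d/(d-1)}$) that one obtains the $\exp(-\gamma n^{d/(d-1)})$ bound for $|a_n|$. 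Also, the weight in the operator is just $\langle A_iz,w\rangle^p$; the factor $\prod_{j\ge2}(1-\lambda_j/\lambda_1)^{-1}$ is not built into the weight but emerges in the trace formula as the reciprocal of $\det(I-DF_\iii)$ at the fixed point, as you yourself note a sentence later.
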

Theorem \ref{th:main} applies in particular if the matrices $A_i$ are all positive matrices, or if the matrices $A_i$ all strictly preserve a single cone $\mathcal{K}$. However, multipositive matrix tuples with neither of these two properties also exist: see \cite{AvBoYo10}. We remark that since $\varrho_p(A_1,\ldots,A_N)=\varrho_p(X^{-1}A_1X,\ldots,X^{-1}A_NX)$ for every invertible matrix $X$, a sufficient condition for the application of Theorem \ref{th:main} is that the matrices $A_i$ be \emph{simultaneously conjugate} to positive matrices.

The reader will notice that the order of convergence in Theorem \ref{th:main} is strongest when the dimension of the matrix is $2$ and becomes weaker as the dimension is increased, although it is in all cases super-exponential in $n$. The problem of estimating the implied constants $K$ and $\gamma$ in \eqref{eq:order} is not attempted in this article; we believe that in the case of tuples of positive matrices this should be feasible in principle, but would rely on difficult functional-analytic estimates such as an \emph{a priori} bound for the cardinality of the relative covers arising in the application of \cite[Theorem 4.7]{BaJe08} to certain complex cones. In any event, convergence in Theorem \ref{th:main} is fast enough to yield significant results in low dimensions. In the previous work \cite{JuPr10}, R. Jungers and V. Yu. Protasov investigated the problem of computing what in our notation corresponds to the quantity
\[1-\log_2 \left(2^{-\frac{1}{p}}\varrho_p(A_1,A_2)^{\frac{1}{p}}\right) = \frac{p+1}{p} -\frac{1}{p}\log_2\varrho_p(A_1,A_2)\]
 for the pair of matrices
\[A_1:=\begin{pmatrix}\frac{1}{5}&0\\ \frac{1}{5}&\frac{3}{5}\end{pmatrix},\qquad A_2:=\begin{pmatrix}\frac{3}{5}&\frac{1}{5}\\ 0&\frac{1}{5}\end{pmatrix} \]
with $p:=3.5$, obtaining an estimate of $1.95 \leq\frac{p+1}{p} -\frac{1}{p}\log_2\varrho_p(A_1,A_2) \leq 1.973$. It happens that the pair $(A_1,A_2)$ is simultaneously conjugate to a pair of positive matrices; taking $n:=20$ in Theorem \ref{th:main} yields the estimate
\begin{equation}\label{eq:showoff}1.953821293179325866750389914731492551138280064126997\ldots\end{equation}
for the same quantity, which is empirically accurate to all decimal places shown.

We remark that in the literature on the generalised Lyapunov exponent, it is common to consider the quantity
\begin{equation}\label{eq:notdef}\lim_{n \to \infty} \left(\sum_{i_1,\ldots,i_n=1}^N p_{i_n}\cdots p_{i_1}\left\|A_{i_n}\cdots A_{i_1}\right\|^p \right)^{\frac{1}{n}}\end{equation}
in place of the quantity $\varrho_p(A_1,\ldots,A_N)$ as defined in \eqref{eq:def}, where $(p_1,\ldots,p_N)$ is a probability vector. The quantity \eqref{eq:notdef} can easily be included within the scope of \eqref{eq:def} and Theorem \ref{th:main} by replacing each instance of a matrix $A_i$ with the corresponding matrix $p_i^{1/p}A_i$. Concretely, this implies that the quantity \eqref{eq:notdef} can be calculated using Theorem \ref{th:main} by taking instead
\[t_n:=\sum_{|\iii|=n}  p_\iii \rho(A_\iii)^p \prod_{j=2}^d \left(1-\frac{\lambda_j(A_\iii)}{\lambda_1(A_\iii)}\right)^{-1} =\sum_{|\iii|=n}p_\iii \frac{\lambda_1(A_\iii)^{d-1}\rho(A_\iii)^p}{p_{A_\iii}'(\lambda_1(A_\iii))}\]
where $p_\iii:=p_{i_1}\cdots p_{i_n}$, and leaving the rest of the theorem unchanged. For the remainder of the article we therefore ignore the issue of giving a probability weighting to each $A_i$ and concentrate on the calculation of the $p$-radius as defined in \eqref{eq:def}.

It is possible to show that the quantities $t_n$ defined in Theorem \ref{th:main} satisfy $\lim_{n \to \infty} t_n \varrho_p(A_1,\ldots,A_N)^{-n}=1$ and therefore increase (or decrease) exponentially with $n$. The efficiency of the estimate in Theorem \ref{th:main} on the other hand relies on the quantities $a_n$ decreasing as $O(\exp(-\gamma n^{\frac{d}{d-1}}))$. The small size of the quantities $a_n$ thus arises from additive cancellation among the relatively large terms in the sum defining each $a_n$. In practical applications it is therefore important to compute the quantities $t_n$ to a precision exceeding that desired for the approximation to $\varrho_p(A_1,\ldots,A_N)$. 

The remainder of this article is structured as follows. In \S\ref{se:review} below we review the fundamental properties of $\varrho_p$ and describe some existing techniques for its estimation. In \S\ref{se:outline} we describe in outline the techniques underlying the proof of Theorem \ref{th:main} and in \S\ref{se:proof} the proof itself is presented. In \S\ref{se:examples} we present some examples of the computation of $\varrho_p$ using the algorithms described herein.


\section{Methods for estimating the $p$-radius}\label{se:review}\subsection{Fundamental estimates}\label{ss:basic}
If $(A_1,\ldots,A_N) \in M_d(\mathbb{R})^N$ and $p \in \mathbb{R}$ then by elementary estimates it follows that $\varrho_p(A_1,\ldots,A_N)=0$ if and only if the joint spectral radius
\[\lim_{n \to \infty}\max_{|\iii|=n} \|A_\iii\|^{\frac{1}{n}}\]
is zero. It is well known that the joint spectral radius is zero if and only if all of the products $A_{i_d}\cdots A_{i_1}$ of length $d$ are zero, if and only if there exists a basis in which all of the matrices $A_1,\ldots,A_N$ are simultaneously upper triangular with all diagonal entries equal to zero (for details see \cite[\S2.3.1]{Ju09}). Since the theory of the $p$-radius is trivial in this situation we will for the remainder of this paper deal only with matrices for which the $p$-radius is assumed to be nonzero. We remark that in the multipositive case considered in Theorem \ref{th:main} every product $A_\iii$ has a simple leading eigenvalue and in particular is not the zero matrix, so in this case $\varrho_p(A_1,\ldots,A_N)$ is guaranteed to be nonzero.

When $p>0$ the $p$-radius admits an elementary description as the limit of a convergent sequence of upper bounds,
\begin{equation}\label{eq:element-up}\varrho_p(A_1,\ldots,A_N)=\lim_{n \to \infty} \left(\sum_{|\iii|=n}  \left\|A_\iii\right\|^p \right)^{\frac{1}{n}}=\inf_{n \geq 1} \left( \sum_{|\iii|=n}\left\|A_\iii\right\|^p \right)^{\frac{1}{n}},\end{equation}
as a consequence of the submultiplicativity relation
\[\sum_{|\iii|=m+n} \|A_\iii\|^p \leq \left(\sum_{|\iii|=m} \|A_\iii\|^p\right)\left(\sum_{|\iii|=n}\|A_\iii\|^p\right).\]
Less trivially, when $p>0$ it may also be expressed as the limit of a convergent sequence of lower bounds:
\begin{align}\label{eq:element-down}\varrho_p(A_1,\ldots,A_N)&=\lim_{n \to \infty} \left(\frac{\sum_{|\iii|=nd} \|A_\iii\|^p}{K(p,d)\left(\sum_{|\iii|=n} \|A_\iii\|^p\right)^{d-1}} \right)^{\frac{1}{n}}\\\nonumber
&=\sup_{n \geq 1} \left(\frac{\sum_{|\iii|=nd} \|A_\iii\|^p}{K(p,d)\left(\sum_{|\iii|=n} \|A_\iii\|^p\right)^{d-1}} \right)^{\frac{1}{n}}\end{align}
where $K(p,d):=d^{2+(d+1)p}\max\{d^{1-p},1\}$, see \cite[Theorem 1.2]{Mo16}. In particular the $p$-radius can in principle be approximated to within any prescribed error $\varepsilon$ by systematically computing the upper and lower bounds until they eventually agree to within the prescribed amount. However, since the computational effort involved increases exponentially with $n$ and the relative error may reasonably be presumed to be at least of the order of $K(p,d)^{1/n}$, and since the constant $K(p,d)$ is relatively large even in the case $d=2$, this procedure seems unlikely to have any value for practical computations. An illustration of this is presented in \S\ref{se:examples} below. We remark that an additional theoretical consequence of the above expressions is that the $p$-radius varies continuously both in $p$ and in the matrix entries when $p$ is positive, since it is then equal to both an upper and a lower pointwise limit of sequences of continuous functions, hence continuous.  When $p<0$ the computability and continuity of the $p$-radius do not seem to have been as thoroughly investigated, but based on the related works \cite{BoMo15,Mo17,TsBl97} it seems likely that continuity should not hold and that systematic upper and lower estimation might be infeasible, at least when the matrices are not assumed to be positive or invertible.

When $p$ is a positive even integer, or when $p$ is a positive integer and the matrices $A_1,\ldots,A_N$ preserve a cone, the identity
\begin{equation}\label{eq:repmet}\varrho_p(A_1,\ldots,A_N)=\rho\left(\sum_{i=1}^N A_i^{\otimes p}\right)\end{equation}
has been discovered independently on several occasions \cite{CrPaVu93,Pr97,Zh98}. (Here $A^{\otimes p}$ denotes the $p^{\mathrm{th}}$ Kronecker power of the matrix $A$, see for example \cite[\S4.2]{HoJo94}.) When $p$ is a positive integer and $A_1,\ldots,A_N$ are not necessarily positive, the inequality
\[\varrho_p(A_1,\ldots,A_N)\leq \rho\left(\sum_{i=1}^N A_i^{\otimes p}\right)\]
may be obtained by the same means. Whilst in principle \eqref{eq:repmet} represents an easy method for computing the $p$-radius of positive matrices, the size of the auxiliary matrix $\sum_{i=1}^N A_i^{\otimes p}$ increases exponentially with $p$ which prevents the use of the formula when $p$ is sufficiently large. For non-integer $p$ these results may nonetheless be exploited so as to yield upper bounds as follows. We observe that if $p_1$ and $p_2$ are real numbers such that $0<p_1<p_2$, and $\lambda \in (0,1)$, then for each $n \geq 1$
\[\sum_{|\iii|=n} \|A_\iii\|^{\lambda p_1 + (1-\lambda)p_2} \leq \left(\sum_{|\iii|=n} \|A_\iii\|^{p_1}\right)^\lambda \left(\sum_{|\iii|=n} \|A_\iii\|^{p_2}\right)^{1-\lambda}\]
using H\"older's inequality with $p:=\frac{1}{\lambda}$ and $q:=\frac{1}{1-\lambda}$. It follows easily that
\[\log \varrho_{\lambda p_1 + (1-\lambda)p_2}(A_1,\ldots,A_N) \leq \lambda\log \varrho_{p_1}(A_1,\ldots,A_N)+ (1-\lambda)\log \varrho_{p_2}(A_1,\ldots,A_N) \]
and hence the function $p \mapsto \log \varrho_p(A_1,\ldots,A_N)$ is convex. This yields the upper bound
\begin{align}\label{eq:logconvex}
\varrho_p(A_1,\ldots,A_N) &\leq \varrho_{\lfloor p\rfloor}(A_1,\ldots,A_N)^{p-\lfloor p\rfloor}\varrho_{1+\lfloor p\rfloor}(A_1,\ldots,A_N)^{1+\lfloor p\rfloor-p} \\ \nonumber
&\leq \rho\left(\sum_{i=1}^N A_i^{\otimes \lfloor p\rfloor}\right)^{p-\lfloor p\rfloor} \rho\left(\sum_{i=1}^N A_i^{\otimes (1+ \lfloor p \rfloor)}\right)^{1+\lfloor p \rfloor-p}\end{align}
valid for all $p>0$ and $A_1,\ldots,A_N \in M_d(\mathbb{R})$, which does not seem to have been previously noted in the literature. We will see in \S\ref{se:examples} below that despite its crudity this estimate does not automatically provide a bad approximation and should not be discounted out of hand.

\subsection{Resampled Monte Carlo methods}\label{ss:van}

In \cite{Va10}, J. Vanneste introduced a method based on the interpretation of the $p$-radius as an asymptotic moment of a random matrix product: given $A_1,\ldots,A_N \in M_d(\mathbb{R})$, $n \geq 1$ and $p \in \mathbb{R}$ we may view the sum $\frac{1}{N^n} \sum_{|\iii|=n} \|A_\iii\|^p$ as the expectation of the random variable $\iii \mapsto \|A_\iii\|^p$ where each word $\iii$ of length $n$ is chosen with probability $1/N^n$. This suggests the possibility of approximating $\frac{1}{N^n} \sum_{|\iii|=n} \|A_\iii\|^p$ for large $n$ by Monte Carlo estimation: if we choose $M$ words $\iii_1,\ldots,\iii_M$ independently then by the law of large numbers, the average $\frac{1}{M}\sum_{k=1}^M \|A_{\iii_k}\|^p$ should for large enough $M$ give a reasonable approximation to the value $\frac{1}{N^n} \sum_{|\iii|=n} \|A_\iii\|^p$ which is that random variable's expectation and hence a good approximation to $\varrho_p(A_1,\ldots,A_N)$ as long as $n$ is reasonably large. However, except which $p$ is small, the variance of this random variable will be prohibitively large -- indeed exponentially large in $n$ -- which makes convergence in the strong law of large numbers unreasonably slow. To compensate for this Vanneste introduced a ``go-with-the-winners'' resampling scheme along the lines of \cite{Gr02}, which successively modifies the distribution of the random variable $\iii \mapsto \|A_\iii\|^p$ so as to retain the same mean while reducing the variance; see discussion in \cite[{\S}III]{Va10} for details.  The particular strength of this method is that it has very limited dependence on the number of matrices and their dimension; on the other hand, the accuracy of the results is relatively low in practice. See \S\ref{se:examples} below for further discussion.

\subsection{The convex optimisation bounds of Jungers and Protasov}\label{ss:jupr}

The article \cite{JuPr11} introduced new systematic upper and lower bounds for the $p$-radius in the case $p \geq 1$. If $(A_1,\ldots,A_N)$ are non-negative matrices, Jungers and Protasov showed that the quantities 
\[\mathsf{a}_p(n)=\inf_{(u_1,\ldots,u_d)\in \mathbb{R}^d} \sum_{|\iii|=n}\left( \max_{1 \leq i \leq d} \sum_{j=1}^d (A_\iii)_{ij}e^{u_j-u_i}\right)^p,\]
\[\mathsf{b}_p(n)=\inf_{(v_1,\ldots,v_d)\in \mathbb{R}^d} \max_{1 \leq j \leq d}\sum_{|\iii|=n}\left( \sum_{i=1}^d (A_\iii)_{ij}e^{v_i-v_j}\right)^p, \]
where $(B)_{ij}$ denotes the $(i,j)$ entry of the matrix $B \in M_d(\mathbb{R})$, satisfy
\[\max\left\{d^{-\frac{p}{n}}\mathsf{a}_p(n)^{\frac{1}{n}},d^{\frac{1-p}{n}}\mathsf{b}_p(n)^{\frac{1}{n}}\right\} \leq \varrho_p(A_1,\ldots,A_N) \leq \mathsf{b}_p(n)^{\frac{1}{n}}\]
for every $n \geq 1$. (Here we have modified the statement of their results in concordance with our definition of $\varrho_p$.) The quantities $\mathsf{a}_p(n)$ and $\mathsf{b}_p(n)$ are solutions to convex optimisation problems and as such may be efficiently approximated. In the case where $(A_1,\ldots,A_N)$ preserves a more general cone $\mathcal{K}$ (in the weak sense that $A_i \mathcal{K} \subseteq \mathcal{K}$ for each $i=1,\ldots,N$) analogous upper and lower bounds are given, but these are not in general the solutions to convex optimisation problems and as such are more difficult to efficiently or rigorously estimate. Since the matrices $A_1^{\otimes 2},\ldots,A_N^{\otimes 2}$ always preserve a cone irrespective of the structure of the original matrices $A_1,\ldots,A_N$, and since $\varrho_p(A_1,\ldots,A_N)=\varrho_{p/2}(A_1^{\otimes 2},\ldots,A_N^{\otimes 2})$ for all $p \in \mathbb{R}$, this more general version of their method permits the estimation of $\varrho_p(A_1,\ldots,A_N)$ for arbitrary $A_1,\ldots,A_N \in M_d(\mathbb{R})$ and $p \geq 2$. 

As with the upper and lower bounds \eqref{eq:element-up} and \eqref{eq:element-down} this system of estimation requires the computation of $N^n$ matrix products in order to obtain the $n^{\mathrm{th}}$ approximation and as such is best suited to cases in which $N$ is small.

\subsection{Eigenvalue methods}\label{ss:eig}

As has been previously observed by J. Vanneste \cite[{\S}II.B]{Va10}, the quantity $\varrho_p(A_1,\ldots,A_N)$ can be represented as the leading eigenvalue of a linear operator on an infinite-dimensional function space in the following manner. 
Suppose that $A_1,\ldots,A_N \in M_d(\mathbb{R})$ are invertible matrices and let $p \in \mathbb{R}$. Let $\mathbb{RP}^{d-1}$ denote the space of lines through the origin in $\mathbb{R}^d$, with the distance between two lines defined to be the angle at which they intersect. For each nonzero $u \in \mathbb{R}^d$ let $\overline{u} \in \mathbb{RP}^{d-1}$ denote the line spanned by $u$. Define an operator on the space $C^\alpha(\mathbb{RP}^{d-1})$ of $\alpha$-H\"older continuous functions $f \colon \mathbb{RP}^{d-1} \to \mathbb{R}$ by
\[\left(\mathcal{L}_pf\right)(\overline{u}):=\sum_{i=1}^N \left(\frac{\|A_iu\|}{\|u\|}\right)^p f(\overline{A_iu})\]
and observe that by a simple calculation
\[\left(\mathcal{L}_p^nf\right)(\overline{u})=\sum_{|\iii|=n} \left(\frac{\|A_\iii u\|}{\|u\|}\right)^p f(\overline{A_\iii u})\]
for every $n \geq 1$, $f \in C^\alpha(\mathbb{RP}^{d-1})$ and $\overline{u} \in \mathbb{RP}^{d-1}$. With only a little more work one may show that in fact
\[\lim_{n \to \infty} \left\|\mathcal{L}_p^n\right\|^{\frac{1}{n}} = \lim_{n \to\infty} \left(\sum_{|\iii|=n}  \|A_\iii\|^p\right)^{\frac{1}{n}},\]
and under mild algebraic non-degeneracy conditions on the matrices $A_i$, a rather longer argument shows that $\varrho_p(A_1,\ldots,A_N)$ is the largest eigenvalue of $\mathcal{L}_p$ acting on $C^\alpha(\mathbb{RP}^{d-1})$ if $\alpha>0$ is chosen sufficiently small (see for example \cite[Th\'eor\`eme 8.8]{GuLe04}). This suggests the idea of calculating $\varrho_p(A_1,\ldots,A_N)$ by approximating the operator $\mathcal{L}_p$ with a large matrix representing the action of the matrices $A_i$ on a discretised version of $\mathbb{RP}^{d-1}$. This approach was previously described in \cite[{\S}IV.A]{Va10} but does not seem to have been investigated in detail. A version of this method was also suggested in \cite[\S8]{Mo18} for the purpose of estimating the Hausdorff dimensions of some self-affine limit sets.

To give a concrete example, in the case $d=2$ write $u(\theta):=(\cos \theta,\sin \theta)$ for each $\theta \in [0,\pi)$ and for $\overline{u},\overline{v}$ let $[\overline{u},\overline{v})$ denote the shorter of the two arcs in $\mathbb{RP}^1$ from $\overline{u}$ to $\overline{v}$, including the former endpoint but not the latter. Fix an integer $n \geq 1$. For each $i=1,\ldots,N$ define an $n \times n$ matrix $B_i=[b_{jk}^{(i)}]_{j,k=0}^{n-1}$ by $b_{jk}^{(i)}:=\|A_iu(j\pi/n)\|^p$ if $\overline{Au(j\pi/n)} \in [\overline{u(k\pi/n)},\overline{u((k+1)\pi/n)})$ and $b_{jk}^{(i)}:=0$ otherwise. Define now the matrix $B:=\sum_{i=1}^N B_i$. Since $B$ corresponds to a version of $\mathcal{L}_p$ acting on functions defined on a discretisation of $\mathbb{RP}^1$ into $n$ evenly-spaced points, we expect that for large $n$ the spectral radius of $B$ should give a reasonable approximation to $\rho(\mathcal{L}_p)=\varrho_p(A_1,\ldots,A_N)$. In principle it may be possible to demonstrate this rigorously using the methods of \cite{KeLi99}, but this does not seem to have so far been attempted in the literature and is certainly a problem beyond the scope of this article.

For two-dimensional matrices this method appears to yield approximations accurate to several decimal places in a tolerable amount of time (see \S\ref{se:examples} below) and it is apparent from the definition that the effect of increasing the number of matrices $N$ has at worst a polynomial effect on the running time of the algorithm. However the size of the matrix required in order to discretise $\mathbb{RP}^{d-1}$ into a mesh of prescribed size $\varepsilon$ rises exponentially with the dimension $d$, suggesting that this method is unlikely to be very useful for matrices which are not of low dimension. The question also arises of whether better estimates may be obtained by adapting the mesh locally so as to include more mesh points in regions where the derivative of one of the maps $\overline{u} \mapsto \overline{A_iu}$ is large and fewer mesh points where it is small. Since the principal purpose of this article is to introduce the new algorithm given by Theorem \ref{th:main}, we leave these questions to other investigators.


\section{Overview of the proof of Theorem \ref{th:main}}\label{se:outline}

In the previous subsection we observed that $\varrho_p(A_1,\ldots,A_N)$ admits an interpretation as the leading eigenvalue of a linear operator on an infinite-dimensional function space and considered the possibility of approximating such an operator directly by operators on finite-dimensional spaces. This is however not the only mechanism by which the leading eigenvalue of an operator may be calculated. In order to describe our chosen alternative we will briefly and informally review some concepts from the theory of trace-class linear operators; thorough formal treatments of this topic may be found in e.g. \cite{GoGoKr00,Si79}.

 If an operator $\mathscr{L}$ on an infinite-dimensional Hilbert space has the property that the sequence of approximation numbers
\[\mathfrak{s}_n(\mathscr{L}):=\inf\left\{\|\mathscr{L}-\mathscr{F}\|\colon \mathrm{rank}\,\mathscr{F}<n\right\}\]
is summable then it is called \emph{trace-class}. If this is the case then $\mathscr{L}$ is a compact operator (since it is a limit in the norm topology of a sequence of finite-rank operators) and therefore its spectrum consists of $0$ together with a finite or infinite set of eigenvalues, each of finite algebraic multiplicity, which has no nonzero accumulation points. It is not difficult to see that $\mathfrak{s}_n(\mathscr{L}^k) \leq \|\mathscr{L}^{k-1}\|\mathfrak{s}_n(\mathscr{L})$ for every $k,n \geq 1$ by direct manipulation of the definition and consequently every power of a trace-class operator is also trace-class. If $\mathscr{L}$ is a trace-class operator on $\mathscr{H}$ with finite or infinite sequence of nonzero eigenvalues $(\lambda_n)_{n=1}^M$, it is classical that the series $\sum_{n=1}^M \lambda_n$ converges absolutely to a quantity which is called the \emph{trace} of $\mathscr{L}$ and denoted $\tr \mathscr{L}$. Moreover the quantity
\[\det(I-z\mathscr{L}):=\prod_{n=1}^M(1-z\lambda_k),\]
called the \emph{Fredholm determinant} of $\mathscr{L}$, defines an entire holomorphic function in the variable $z$ with power series $\sum_{n=0}^\infty a_nz^n$, say. It is also classical that in this case the zeros of $z \mapsto \det(I-z\mathscr{L})$ are precisely the reciprocals of the nonzero eigenvalues of $\mathscr{L}$ and that additionally
\begin{equation}\label{eq:qe}a_n= \sum_{i_1<i_2<\cdots <i_n} \lambda_{i_1}\cdots \lambda_{i_n} = \frac{1}{n!} \sum_{n_1+\cdots +n_k=n} \prod_{i=1}^k \left(-\frac{\tr \mathscr{L}^{n_i}}{n_i}\right)\end{equation}
for every $n\geq 1$, where $a_0:=1$ and where $\lambda_k$ is interpreted as zero if $k > M$. It follows that if the traces $\tr \mathscr{L}^k$ can be easily calculated for $k=1,\ldots,n$, say, then an approximation $\sum_{k=0}^n a_kz^k$ to the Fredholm determinant  can be constructed using \eqref{eq:qe} and it might be hoped that the smallest positive real root of the polynomial  $\sum_{k=0}^n a_kz^k$ would provide a good estimate for the reciprocal of the leading eigenvalue $\rho(\mathscr{L})$ of $\mathscr{L}$ as long as the remainder $\sum_{k=n+1}^\infty a_kz^k$ is extremely small. In view of the equation \eqref{eq:qe} it follows that if the sequence $(\lambda_n)_{n=1}^M$ can be shown to decay stretched-exponentially then this remainder will in fact be super-exponentially small, and this is indeed the approach which we will take in estimating $\varrho_p(A_1,\ldots,A_N)$. This general  approach to estimating dynamical quantities via operator eigenvalues has been previously exploited in a number of prior articles of which we note \cite{Ba07,JePo02,JePo05,Mo18,Po10,PoJe00,PoWe08}.

The proof of Theorem \ref{th:main} therefore proceeds via the introduction of a trace-class operator $\mathscr{L}$ on a Hilbert space $\mathscr{H}$ with the properties required by the argument sketched above: a stretched-exponential estimate on the singular values $\mathfrak{s}_n(\mathscr{L})$ (which implies a stretched-exponential estimate on the eigenvalues via Weyl's inequality), the property $\rho(\mathscr{L})=\varrho_p(A_1,\ldots,A_N)$,  and a simple, computationally-feasible formula for the sequence of traces $\tr \mathscr{L}^n$. The following result from \cite{Mo18} saves us the necessity of constructing such an operator from first principles:
\begin{theorem}[{\cite[Corollary 5.1]{Mo18}}]\label{th:opter}
Let $d,N \geq 2$, let $A_1,\ldots,A_N$ be real $d \times d$ matrices and suppose that  $(\mathcal{K}_1,\ldots,\mathcal{K}_m)$ is a multicone for $(A_1,\ldots,A_N)$ with transverse-defining vector $w \in \mathbb{R}^d$. Then there exists a nonempty bounded open subset $\Omega$ of the complex hyperplane  $\{z \in \mathbb{C}^d \colon \langle z,w\rangle=1\}$ such that the following properties hold. Let $\mathcal{A}^2(\Omega)$ denote the separable complex Hilbert space of holomorphic functions $\Omega \to \mathbb{C}$ for which the integral $\int_{\Omega} |f(z)|^2 dV(z)$ is finite, where $V$ denotes $2(d-1)$-dimensional Lebesgue measure on $\Omega$. For each $p \in \mathbb{C}$ define an operator $\mathscr{L}_p \colon \mathcal{A}^2(\Omega) \to \mathcal{A}^2(\Omega)$ by
\[\left(\mathscr{L}_pf\right)(z):=\sum_{i=1}^N \left(\frac{\langle A_i z,w\rangle}{\sign \Re(\langle A_iz,w\rangle)}\right)^p f\left(\langle A_iz,w\rangle^{-1}A_iz\right).\]
Then the operators $\mathscr{L}_p$ are well-defined bounded linear operators on $\mathcal{A}^2(\Omega)$ and:
\begin{enumerate}[(i)]
\item
There exist $C,\kappa,\gamma>0$ such that for all $p \in \mathbb{C}$ and $n \geq 1$ we have
\[\mathfrak{s}_n(\mathscr{L}_p) \leq C\exp\left(\kappa |p|-\gamma n^{\frac{1}{d-1}}\right).\]
In particular each $\mathscr{L}_p$ is trace-class.
\item
For every $p \in \mathbb{C}$ and $n \geq 1$ we have
\[\tr \mathscr{L}_p^n =\sum_{|\iii|=n} \rho(A_\iii)^p \prod_{j=2}^d \left(1-\frac{\lambda_j(A_\iii)}{\lambda_1(A_\iii)}\right)^{-1}=\sum_{|\iii|=n}\frac{\lambda_1(A_\iii)^{d-1}\rho(A_\iii)^p}{p_{A_\iii}'(\lambda_1(A_\iii))}\]
\item
For every $p \in \mathbb{R}$ the spectral radius of $\mathscr{L}_p$ is equal to
\[\lim_{n \to \infty} \left(\sum_{|\iii|=n}\left\|A_\iii\right\|^p\right)^{\frac{1}{n}}.\]
\item
For all $p \in \mathbb{R}$ the spectral radius of $\mathscr{L}_p$ is a simple eigenvalue of $\mathscr{L}_p$ and there are no other eigenvalues of the same modulus.
\end{enumerate}
\end{theorem}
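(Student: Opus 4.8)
The plan is to realise $\mathscr{L}_p$ as a weighted composition operator --- a holomorphic transfer operator --- attached to the projective action of the matrices $A_i$ on a complexification of the multicone, and then to read off (i)--(iv) from the now-standard spectral theory of such operators on Bergman spaces of bounded domains. The key structural facts to exploit are that a linear map strictly contracting a real cone becomes a strict contraction of the associated complex cone, that such strict contractions have unique fixed points carrying a clean Lefschetz trace, and that they fit into the Bandtlow--Jenkinson framework for approximation-number estimates.

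First I would construct $\Omega$ by complexifying. To each real cone $\mathcal{K}_j$ one attaches a complex cone $\mathbb{C}\mathcal{K}_j\subset\mathbb{C}^d$ with real trace $\mathcal{K}_j$ (as in Rugh's theory of complex cones), chosen small enough that the transverse-defining inequality $\langle u,w\rangle\geq c\|u\|$ on $\bigcup_j\mathcal{K}_j$ persists as $|\langle z,w\rangle|\geq c'\|z\|$ on $\bigcup_j\mathbb{C}\mathcal{K}_j$. Since each $A_i$ strictly preserves the multicone, the basic geometric input shows that $z\mapsto A_iz$ carries $\mathbb{C}\mathcal{K}_j\setminus\{0\}$ into a set whose projectivisation is a compact subset of the projectivisation of $\pm\mathbb{C}\mathcal{K}_{j'}$ for the appropriate $j'$ and sign, the sign being read off by $\sign\Re\langle A_iz,w\rangle$ (well defined and locally constant because $\Re\langle A_iz,w\rangle$ is bounded away from $0$ there). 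Taking $\Omega$ to be the image of $\Int\big(\bigcup_j\mathbb{C}\mathcal{K}_j\big)$ in the affine chart $\{\langle z,w\rangle=1\}$, after a harmless shrinking if necessary, gives a nonempty, open, bounded set ($\|z\|\leq 1/c'$ on the chart) on a neighbourhood of whose closure the maps $\phi_i(z):=\langle A_iz,w\rangle^{-1}A_iz$ and the weights $w_i(z):=\big(\langle A_iz,w\rangle/\sign\Re\langle A_iz,w\rangle\big)^p$ are holomorphic, with $\phi_i(\overline\Omega)$ a compact subset of $\Omega$ and $\sup_{\overline\Omega}|w_i|\leq e^{\kappa|p|}$ for some $\kappa$ depending only on $\Omega$ and the $A_i$; the $\sign$-normalisation is exactly what rotates the argument of $w_i$ into a half-plane on which the principal $p$th power is single-valued and holomorphic.

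With these data each $C_{\phi_i}$ is a bounded (indeed compact) composition operator on the separable Bergman space $\mathcal{A}^2(\Omega)$ and each multiplication $M_{w_i}$ by a bounded holomorphic symbol is bounded, so $\mathscr{L}_p=\sum_i M_{w_i}C_{\phi_i}$ is bounded. For (i) I would argue as in the Bandtlow--Jenkinson estimates of \cite{BaJe08}: since $\bigcup_i\phi_i(\overline\Omega)$ is a compact subset of the $(d-1)$-complex-dimensional domain $\Omega$, Cauchy estimates give $\theta\in(0,1)$ so that every $f\in\mathcal{A}^2(\Omega)$ is uniformly approximated on $\bigcup_i\phi_i(\overline\Omega)$ to within $O(\theta^m\|f\|)$ by its degree-$m$ Taylor polynomial in the $d-1$ affine coordinates; hence $\mathscr{L}_pf$ lies within $O\big(e^{\kappa|p|}\theta^m\|f\|\big)$ of the span of the finitely many functions $w_i\cdot(\text{monomial of degree}\leq m)\circ\phi_i$, a space of dimension $O(m^{d-1})$. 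Taking $m\sim n^{1/(d-1)}$ yields $\mathfrak{s}_n(\mathscr{L}_p)\leq C\exp(\kappa|p|-\gamma n^{1/(d-1)})$, which is summable, so $\mathscr{L}_p$ is trace-class. For (ii) I would use the holomorphic Lefschetz (Atiyah--Bott) fixed-point formula: iteration gives $\mathscr{L}_p^nf=\sum_{|\iii|=n}W_\iii\cdot(f\circ\phi_\iii)$ with $\phi_\iii(z)=\langle A_\iii z,w\rangle^{-1}A_\iii z$ and $W_\iii$ a telescoping product of the $w_{i_k}$, and multipositivity guarantees that some power of $A_\iii$ strictly preserves a genuine cone, so $\phi_\iii$ is a self-map of $\Omega$ with relatively compact image, hence by Earle--Hamilton a strict contraction with a unique fixed point $z_\iii^\ast$, namely the projectivised leading eigendirection of $A_\iii$. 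The Lefschetz formula gives $\tr(M_{W_\iii}C_{\phi_\iii})=W_\iii(z_\iii^\ast)\big/\det\!\big(I-D\phi_\iii(z_\iii^\ast)\big)$; evaluating, $W_\iii(z_\iii^\ast)=\big(\lambda_1(A_\iii)/\sign\lambda_1(A_\iii)\big)^p=\rho(A_\iii)^p$, while the projective derivative of a linear map at its leading eigendirection has eigenvalues $\lambda_j(A_\iii)/\lambda_1(A_\iii)$ for $j=2,\ldots,d$, so $\det\!\big(I-D\phi_\iii(z_\iii^\ast)\big)=\prod_{j=2}^d\big(1-\lambda_j(A_\iii)/\lambda_1(A_\iii)\big)=\lambda_1(A_\iii)^{-(d-1)}p_{A_\iii}'(\lambda_1(A_\iii))$; summing over $|\iii|=n$ gives (ii).

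For (iii) I would sandwich $\|\mathscr{L}_p^n\|$: the bound $\|\mathscr{L}_p^n\|\leq C\sum_{|\iii|=n}\|A_\iii\|^p$ follows from $\|C_{\phi_\iii}\|\leq C'$ uniformly (every $\phi_\iii$ maps $\overline\Omega$ into one fixed compact subset of $\Omega$) together with $\sup_{\overline\Omega}|W_\iii|\leq C''\|A_\iii\|^p$, using that $|\langle A_\iii z,w\rangle|$ is comparable to $\|A_\iii\|$ on $\overline\Omega$ (vectors of $\overline\Omega$ essentially point along a uniformly expanded direction and the complex cone is uniformly transverse to $w$); the reverse bound comes from testing $\mathscr{L}_p^n$ on the constant function at a real interior point $z_0$ of a cone, where $W_\iii(z_0)=|\langle A_\iii z_0,w\rangle|^p>0$ so there is no cancellation and $(\mathscr{L}_p^n\mathbf 1)(z_0)=\sum_{|\iii|=n}|\langle A_\iii z_0,w\rangle|^p$ is comparable to $\sum_{|\iii|=n}\|A_\iii\|^p$, combined with boundedness of point evaluations on $\mathcal{A}^2(\Omega)$; Gelfand's formula then identifies $\rho(\mathscr{L}_p)$ with $\lim_n(\sum_{|\iii|=n}\|A_\iii\|^p)^{1/n}$. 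For (iv), over the real slice $\mathscr{L}_p$ is a positive operator whose associated projective iterated function system is made primitive by the strict cone contraction, so a Perron--Frobenius / Ruelle--Perron--Frobenius argument --- cleanest via the complex Birkhoff contraction of Rugh/Dubois applied to the action of $\mathscr{L}_p$ on a cone of holomorphic functions --- yields a simple leading eigenvalue with positive eigenfunction and a spectral gap, so no other eigenvalue has the same modulus. I expect the genuine difficulty to be the quantitative part of (i): soft compactness gives trace-classness for free, but obtaining the precise rate $n^{1/(d-1)}$ together with only linear growth in $|p|$ of the constant requires the careful several-variable polynomial-approximation bookkeeping and geometric control of $\partial\Omega$ behind \cite[Theorem 4.7]{BaJe08}; a secondary difficulty is making the complexification of the genuinely multi-cone case uniform, so that a single $\Omega$ serves all the $\phi_i$ while respecting the sign combinatorics of the maps $\mathcal{K}_i\mapsto\pm\mathcal{K}_j$.
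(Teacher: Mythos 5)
Note first that the present paper does not prove Theorem~\ref{th:opter}: the statement is imported verbatim from \cite[Corollary~5.1]{Mo18}, and the only hints the paper gives about its proof are the remarks in \S\ref{se:outline} and the Conclusions pointing to trace-class weighted composition operators on Bergman spaces over complex cones and to \cite[Theorem~4.7]{BaJe08}. There is therefore no internal proof to compare your argument against, and what follows is an assessment of the proposal on its own merits in light of those hints.

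Your outline is consistent with the ingredients the paper alludes to and, at the level of a sketch, the route is sound. The complexification of the multicone via Rugh-type complex cones, the realisation of $\mathscr{L}_p$ as $\sum_i M_{w_i}C_{\phi_i}$ on $\mathcal{A}^2(\Omega)$, the Bandtlow--Jenkinson polynomial-approximation argument for the stretched-exponential decay of approximation numbers, the holomorphic Lefschetz/Atiyah--Bott fixed-point evaluation of each $\tr(M_{W_\iii}C_{\phi_\iii})$, and the Ruelle--Perron--Frobenius/Birkhoff-contraction argument for the spectral gap are exactly the tools one would deploy here, and your algebraic identity $\prod_{j\geq 2}(1-\lambda_j/\lambda_1)=\lambda_1^{-(d-1)}p_{A_\iii}'(\lambda_1)$ is correct. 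You have also, helpfully, flagged where the real work lies: getting the exponent $n^{1/(d-1)}$ and the linear $\kappa|p|$ dependence in~(i), and making a single domain $\Omega$ serve all $\phi_i$ while accommodating the $\mathcal{K}_i\mapsto\pm\mathcal{K}_{j}$ sign combinatorics. Two points deserve slightly more care than the sketch gives them. In~(ii) the trace formula is applied term by term, but one must verify for each $\iii$ that $\phi_\iii$ does map $\Omega$ holomorphically into a relatively compact subset of itself so that Earle--Hamilton applies; in the genuinely multicone case this is where the sign normalisation and the fact that $\langle A_\iii z,w\rangle^{-1}A_\iii z$ lands back in the positive affine chart must be checked, rather than merely that some power of $A_\iii$ preserves a single cone. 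In~(iii) the lower bound via testing $\mathscr{L}_p^n\mathbf{1}$ at a real interior point $z_0$ uses positivity of the weights $W_\iii(z_0)$; for real $p$ this is fine, but the statement of~(iii) is asserted for all real $p$ including negative $p$, and there one also needs that $|\langle A_\iii z_0,w\rangle|$ is comparable to $\|A_\iii\|$ from \emph{above} as well as below so that the $p$th power does not degrade the comparison. Both are routine in the cone setting but should be stated explicitly rather than left implicit.
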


Theorem \ref{th:main} can thus be seen as a version of the eigenvalue-problem approach discussed in the previous section, but one which takes advantage of the special additional structure of trace-class operators. Note that since trace-class operators are compact operators they are very far from being invertible, and indeed an important feature of the hypotheses of Theorem \ref{th:opter} is that the transformations $A_i$ map a (not necessarily connected) patch of $\mathbb{RP}^{d-1}$ strictly inside itself -- which results in a non-invertible action on the associated function space -- as opposed to acting transitively on $\mathbb{RP}^{d-1}$. This feature is precisely the content of the multicone hypothesis, and indeed the non-invertibility of the action on $\mathbb{RP}^{d-1}$ is critical in constructing a space on which the  operators $\mathscr{L}_p$ can act in a trace-class manner. As such any extension of the method of Theorem \ref{th:main} to families of matrices with non-real eigenvalues is therefore likely to be impossible since such matrices would tend to act transitively on the phase space $\mathbb{RP}^{d-1}$, preventing the construction of a suitable domain for a trace-class operator to act upon.



\section{Proof of Theorem \ref{th:main}}\label{se:proof}

The following result summarises the classical results on traces and determinants of trace-class operators on Hilbert spaces which will be required in our proof. It is a combination of several results from \cite[\S3]{Si79}, with the exception of the determinant formula for $a_n$ which may be found instead in, for example, \cite[Theorem 6.8]{Si77} or \cite[Theorem IV.5.2]{GoGoKr00}.
\begin{theorem}\label{th:dets}
Let $\mathscr{H}$ be a complex separable Hilbert space, let $\mathscr{L}$ be a trace-class operator acting on $\mathscr{H}$, and define $a_0:=1$ and 
\begin{align*}a_n:=&\frac{(-1)^n}{n!}\det\begin{pmatrix}
\tr \mathscr{L} & n -1&  0&\cdots &0 &0\\
\tr \mathscr{L}^2&\tr \mathscr{L} &n-2 &\cdots &0 &0\\
\tr \mathscr{L}^3&\tr \mathscr{L}^2&\tr \mathscr{L}  &\ddots &0 &0\\
\vdots & \vdots & \vdots & \ddots  &\ddots& \vdots\\
\tr \mathscr{L}^{n-1} &\tr \mathscr{L}^{n-2}&\tr \mathscr{L}^{n-3}&\cdots &\tr \mathscr{L}  &1\\
\tr \mathscr{L}^{n} &\tr \mathscr{L}^{n-1}&\tr \mathscr{L}^{n-2}&\cdots &\tr \mathscr{L}^2 &\tr \mathscr{L} 
\end{pmatrix}\\=&\sum_{k=1}^n \frac{(-1)^k}{k!}\sum_{\substack{n_1,\ldots,n_k \geq 1\\n_1+\cdots +n_k=n}} \prod_{\ell=1}^k \frac{t_{n_\ell}}{n_\ell}\end{align*}
for every $n \geq 1$. Then the power series $\mathscr{D}(z):=\sum_{n=0}^\infty a_nz^n$ converges for all $z \in \mathbb{C}$. The function $\mathscr{D} \colon \mathbb{C} \to \mathbb{C}$ is holomorphic, the zeros of $\mathscr{D}$ are precisely the reciprocals of the nonzero eigenvalues of $\mathscr{L}$, and the degree of each zero of $\mathscr{D}$ is equal to the algebraic multiplicity of the corresponding eigenvalue of $\mathscr{L}$. Moreover the coefficients $a_n$ satisfy the estimate
\[|a_n| \leq \sum_{i_1<i_2<\cdots<i_n} \mathfrak{s}_{i_1}(\mathscr{L}) \cdots  \mathfrak{s}_{i_n}(\mathscr{L}) \]
for every $n \geq 1$.
\end{theorem}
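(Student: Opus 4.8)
\emph{Approach.} The plan is to assemble Theorem~\ref{th:dets} from the classical finite-rank case by a stable limiting procedure, isolating Lidskii's trace theorem as the one deep external input. Write a Schmidt decomposition $\mathscr{L}=\sum_{k\geq 1}\mathfrak{s}_k(\mathscr{L})\langle\,\cdot\,,e_k\rangle f_k$ with $(e_k)$ and $(f_k)$ orthonormal systems, and for each $m$ let $\mathscr{L}_m$ be the rank-$m$ truncation obtained by keeping the first $m$ summands, so that $\mathscr{L}_m\to\mathscr{L}$ in operator norm and, crucially, $\sum_{k>m}\mathfrak{s}_k(\mathscr{L})\to0$. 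On its finite-dimensional range $z\mapsto\det(I-z\mathscr{L}_m)$ is an ordinary polynomial whose coefficient of $z^n$ is $(-1)^n$ times the $n$-th elementary symmetric function of the eigenvalues of $\mathscr{L}_m$; by Newton's identities this coefficient equals the quantity $a_n^{(m)}$ produced by the displayed Hessenberg determinant (equivalently the composition sum) with each $\tr\mathscr{L}^k$ replaced by $\tr\mathscr{L}_m^k$.

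\emph{Coefficient bound and convergence.} Since $(-1)^n a_n^{(m)}=\tr\bigl(\wedge^n\mathscr{L}_m\bigr)$ and the singular values of the $n$-fold antisymmetric tensor power $\wedge^n\mathscr{L}_m$ are exactly the products $\mathfrak{s}_{i_1}(\mathscr{L}_m)\cdots\mathfrak{s}_{i_n}(\mathscr{L}_m)$ over $i_1<\cdots<i_n$, and since $|\tr T|\leq\|T\|_1$ for any trace-class $T$, we get $|a_n^{(m)}|\leq\sum_{i_1<\cdots<i_n}\mathfrak{s}_{i_1}(\mathscr{L})\cdots\mathfrak{s}_{i_n}(\mathscr{L})$ uniformly in $m$. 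The right-hand side is the $n$-th elementary symmetric function of the summable sequence $(\mathfrak{s}_k(\mathscr{L}))_k$, and $\sum_n e_n(\mathfrak{s})|z|^n=\prod_{k\geq1}\bigl(1+|z|\,\mathfrak{s}_k(\mathscr{L})\bigr)<\infty$ for every $z\in\mathbb{C}$. Hence the partial sums $\det(I-z\mathscr{L}_m)=\sum_n a_n^{(m)}z^n$ form a family dominated by a single convergent power series on every disc; together with $a_n^{(m)}\to a_n$ (which follows from $\tr\mathscr{L}_m^k\to\tr\mathscr{L}^k$, itself a consequence of $\|\mathscr{L}-\mathscr{L}_m\|_1\to0$) and a dominated-convergence argument, this shows $\det(I-z\mathscr{L}_m)\to\mathscr{D}(z):=\sum_n a_n z^n$ locally uniformly, that $\mathscr{D}$ is entire, that $a_n$ coincides with both displayed expressions in $\tr\mathscr{L}^k$, and that $|a_n|\leq\sum_{i_1<\cdots<i_n}\mathfrak{s}_{i_1}(\mathscr{L})\cdots\mathfrak{s}_{i_n}(\mathscr{L})$.

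\emph{Zeros and multiplicities.} For each $m$ one has the Hadamard product $\det(I-z\mathscr{L}_m)=\prod_k\bigl(1-z\lambda_k(\mathscr{L}_m)\bigr)$ over the nonzero eigenvalues of $\mathscr{L}_m$ counted with algebraic multiplicity. By Weyl's inequality $\sum_k|\lambda_k(\mathscr{L})|\leq\sum_k\mathfrak{s}_k(\mathscr{L})<\infty$, so $\prod_k\bigl(1-z\lambda_k(\mathscr{L})\bigr)$ converges to an entire function; and by the stability of finite systems of eigenvalues under the perturbation $\mathscr{L}_m\to\mathscr{L}$ (operator-norm control plus trace-norm control of the tail) the nonzero spectrum of $\mathscr{L}_m$ converges to that of $\mathscr{L}$ with multiplicity, while Lidskii's theorem guarantees $\tr\mathscr{L}^k=\sum_j\lambda_j(\mathscr{L})^k$. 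Passing to the limit in the product gives $\mathscr{D}(z)=\prod_k\bigl(1-z\lambda_k(\mathscr{L})\bigr)$, from which the zero set of $\mathscr{D}$ is $\{1/\lambda_k(\mathscr{L})\}$ and the order of the zero at $1/\lambda$ equals the number of indices $k$ with $\lambda_k(\mathscr{L})=\lambda$, i.e. the algebraic multiplicity of $\lambda$ as an eigenvalue of $\mathscr{L}$.

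\emph{Equivalence of the two formulas, and the main obstacle.} The identity between the Hessenberg determinant and the composition sum is purely formal: with indeterminates $t_1,t_2,\ldots$, setting $a_0:=1$ and $a_n:=\sum_{k=1}^n\frac{(-1)^k}{k!}\sum_{n_1+\cdots+n_k=n}\prod_\ell\frac{t_{n_\ell}}{n_\ell}$ one has
\[\sum_{n\geq0}a_nz^n=\exp\Bigl(-\sum_{j\geq1}\tfrac{t_j}{j}z^j\Bigr),\]
and differentiating yields the recursion $n a_n+\sum_{j=1}^n t_j a_{n-j}=0$; solving this recursion by Cramer's rule produces exactly the displayed lower-Hessenberg determinant with superdiagonal entries $n-1,n-2,\ldots,1$. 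Taking $t_j=\tr\mathscr{L}^j$ gives the Plemelj--Smithies formula. The genuinely nontrivial step is the eigenvalue side of the third paragraph: everything concerning $\mathscr{D}$ and its coefficients is elementary once the summable singular-value majorant is in hand, but identifying $\mathscr{D}$ with $\prod_k(1-z\lambda_k(\mathscr{L}))$ --- equivalently, the trace formula $\tr\mathscr{L}=\sum_k\lambda_k(\mathscr{L})$ for a possibly non-normal trace-class operator --- requires Lidskii's theorem (or, alternatively, a contour-integral argument recovering the nonzero eigenvalues as residues of $\mathscr{D}'/\mathscr{D}$), and this is the one ingredient we import wholesale from \cite{Si79}.
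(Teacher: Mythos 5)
The paper does not prove Theorem~\ref{th:dets}; it cites \cite{Si79} for the trace/determinant machinery and \cite{Si77,GoGoKr00} for the Hessenberg-determinant coefficient formula, treating the entire statement as classical. Your proof is therefore a self-contained reconstruction rather than a parallel of any argument in the paper. The route you take---Schmidt decomposition, finite-rank truncation $\mathscr{L}_m$, uniform coefficient bounds through $\wedge^n\mathscr{L}_m$, dominated convergence, Lidskii---is the standard development of the Plemelj--Smithies theory and is essentially the way the cited sources proceed, so it is a reasonable proof to supply. The central estimate $|a_n^{(m)}|\leq\sum_{i_1<\cdots<i_n}\mathfrak{s}_{i_1}(\mathscr{L})\cdots\mathfrak{s}_{i_n}(\mathscr{L})$, obtained from $(-1)^na_n^{(m)}=\tr(\wedge^n\mathscr{L}_m)$ together with $|\tr T|\leq\|T\|_1$, is correct and is precisely where the summability of the singular values enters; the formal equivalence of the composition sum and the Hessenberg determinant via the generating function $\exp(-\sum_j t_jz^j/j)$ and the recursion $na_n+\sum_{j=1}^n t_ja_{n-j}=0$ is likewise correct.

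The one genuine gap is in the zero-set paragraph. You assert that the nonzero spectrum of $\mathscr{L}_m$ converges to that of $\mathscr{L}$ with multiplicity, and then that "passing to the limit in the product" yields $\mathscr{D}(z)=\prod_k(1-z\lambda_k(\mathscr{L}))$. Operator-norm convergence does give spectral convergence with multiplicity on any region $\{|z|>\epsilon\}$ bounded away from the origin, but upgrading this to convergence of the infinite Hadamard products needs in addition a uniform-in-$m$ control of the product tails (for instance via $\sum_{k>K}|\lambda_k(\mathscr{L}_m)|\leq\sum_{k>K}\mathfrak{s}_k(\mathscr{L}_m)\leq\sum_{k>K}\mathfrak{s}_k(\mathscr{L})$) together with a judicious choice of cut-off $K$ that avoids ties in eigenvalue moduli; as written this is asserted rather than argued. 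The cleaner route, which you mention only parenthetically, avoids perturbation theory entirely: the composition-sum formula gives $\mathscr{D}(z)=\exp\bigl(-\sum_{j\geq1}\tfrac{1}{j}(\tr\mathscr{L}^j)z^j\bigr)$ for $|z|<\rho(\mathscr{L})^{-1}$; Lidskii's theorem applied to $\mathscr{L}^j$, together with the spectral-mapping theorem for algebraic multiplicities of compact operators, gives $\tr\mathscr{L}^j=\sum_k\lambda_k(\mathscr{L})^j$; absolute convergence lets you interchange the sums over $j$ and $k$ and recover $\prod_k(1-z\lambda_k(\mathscr{L}))$ on that small disc; and since both sides are entire (the product because $\sum_k|\lambda_k(\mathscr{L})|\leq\sum_k\mathfrak{s}_k(\mathscr{L})<\infty$ by Weyl), the identity extends to all of $\mathbb{C}$. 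I would promote that argument to the primary one and drop the appeal to spectral stability under perturbation.
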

We also require the following elementary lemma:
\begin{lemma}\label{le:sus}
For each $\gamma,\alpha>0$ there exists a constant $K=K(\alpha,\gamma)>0$ such that
\[\sum_{n=m}^\infty \exp\left(-\gamma n^\alpha\right) \leq K\exp\left(-\frac{\gamma}{2^{1+\alpha}} m^\alpha\right)\]
for all $m\geq 1$.
\end{lemma}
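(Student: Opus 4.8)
The plan is to exploit the monotonicity of $t \mapsto t^\alpha$ in order to absorb a small fixed fraction of the exponent $\gamma n^\alpha$ into a factor depending only on $m$, leaving behind an absolutely convergent series that contributes the constant $K$. Concretely, I would set $\beta := \gamma\,2^{-(1+\alpha)}$, so that $0 < \beta < \gamma$, and note that for every pair of integers $n \geq m \geq 1$ one has $n^\alpha \geq m^\alpha$, whence
\[
\gamma n^\alpha \;=\; \beta n^\alpha + (\gamma-\beta) n^\alpha \;\geq\; \beta m^\alpha + (\gamma - \beta) n^\alpha,
\]
and therefore $\exp(-\gamma n^\alpha) \leq \exp(-\beta m^\alpha)\exp(-(\gamma-\beta)n^\alpha)$.

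Summing this inequality over $n \geq m$ and then enlarging the range of summation to all $n \geq 1$ would give
\[
\sum_{n=m}^\infty \exp(-\gamma n^\alpha) \;\leq\; \exp(-\beta m^\alpha)\sum_{n=1}^\infty \exp\bigl(-(\gamma-\beta)n^\alpha\bigr),
\]
so that it only remains to check that the auxiliary series $\sum_{n=1}^\infty \exp(-(\gamma-\beta)n^\alpha)$ converges; its sum is then the desired constant $K(\alpha,\gamma)$. Convergence is routine: since $\gamma - \beta > 0$ and $\alpha > 0$ we have $(\gamma-\beta)n^\alpha \geq 2\log n$ for all large $n$, so the general term is eventually dominated by $n^{-2}$ (alternatively one may apply the root or ratio test, or simply compare with a geometric series when $\alpha \geq 1$). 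This completes the argument. Note that the factor $2^{1+\alpha}$ in the statement enters only through the choice of $\beta$: any $\beta \in (0,\gamma)$ works equally well, and this particular value is chosen merely for later convenience.

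I do not anticipate a genuine obstacle here; the lemma is elementary, and the only step requiring any verification at all is the convergence of the auxiliary series, which is immediate. The purpose of the lemma in the proof of Theorem \ref{th:main} will be to convert the stretched-exponential bound $|a_k| = O(\exp(-\gamma k^{d/(d-1)}))$, obtained by combining Theorem \ref{th:opter}(i) with the coefficient estimate in Theorem \ref{th:dets}, into a super-exponentially small bound on the tail $\sum_{k > n} a_k z^k$ of the Fredholm determinant's power series, uniformly for $z$ ranging over a fixed neighbourhood of $1/\varrho_p(A_1,\ldots,A_N)$.
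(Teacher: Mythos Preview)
Your argument is correct and in fact more direct than the paper's. You split the exponent as $\gamma n^\alpha = \beta n^\alpha + (\gamma-\beta)n^\alpha$, use $n^\alpha \geq m^\alpha$ on the first piece to extract the factor $\exp(-\beta m^\alpha)$, and then bound the remaining sum by the full convergent series $\sum_{n\geq 1}\exp(-(\gamma-\beta)n^\alpha)$, which supplies the constant $K$. This is clean, entirely elementary, and makes transparent that the particular value $\beta=\gamma/2^{1+\alpha}$ is an arbitrary choice in $(0,\gamma)$.

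The paper instead uses an integral comparison: it bounds the tail sum by $\int_{m-1}^\infty e^{-\gamma t^\alpha}\,dt$, substitutes $u=t^\alpha$, and then absorbs the polynomial factor $u^{1/\alpha-1}$ into half of the exponential decay, leaving $\int_{(m-1)^\alpha}^\infty e^{-\gamma u/2}\,du$; the inequality $(m-1)^\alpha\geq (m/2)^\alpha$ for $m\geq 2$ produces the stated exponent. Both routes yield the same conclusion with an unspecified constant $K$; the integral approach would give slightly sharper explicit constants if one cared to track them, whereas your splitting trick is shorter, avoids the case distinction $m\geq 2$, and makes the role of the exponent $\gamma/2^{1+\alpha}$ clearer. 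Your closing remark about the lemma's role in bounding the Fredholm-determinant tail is also accurate.
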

\begin{proof}
Fix $\gamma$ and $\alpha$. By adjusting the constant $K$ if necessary we may without loss of generality assume $m \geq 2$. Define
\[C:=\sup\left\{u^{\frac{1}{\alpha}-1} \exp\left(-\frac{\gamma}{2}u\right)\colon u\geq 1\right\}>0.\]
Since clearly $e^{-\gamma n^\alpha}\leq \int_{n-1}^n e^{-\gamma t^\alpha}dt$ for every integer $n$ we have 
\begin{align*}\sum_{n=m}^\infty \exp\left(-\gamma n^\alpha\right) &\leq \int_{m-1}^\infty \exp\left(-\gamma t^\alpha \right)dt\\
&= \frac{1}{\alpha}\int_{\left(m-1\right)^\alpha}^\infty u^{\frac{1}{\alpha}-1} \exp\left(-\gamma u \right)du\\
&\leq \frac{C}{\alpha} \int_{\left(m-1\right)^\alpha}^\infty  \exp\left(-\frac{\gamma}{2} u \right)du\\
&= \frac{C}{\alpha} \exp\left(-\frac{\gamma}{2}\left(m-1\right)^\alpha \right) \leq\frac{C}{\alpha} \exp\left(-\frac{\gamma}{2^{1+\alpha}}m^\alpha \right)
\end{align*}
for every $m \geq 2$ and the result follows.
\end{proof}

We may now begin the proof of Theorem \ref{th:main}. Fix $A_1,\ldots,A_N$ and $p \in \mathbb{R}$ as in Theorem \ref{th:main}. By Theorem \ref{th:opter} there exist a complex separable Hilbert space $\mathscr{H}$ and a trace-class linear operator $\mathscr{L}_p \colon \mathscr{H} \to \mathscr{H}$ such that $\varrho_p(A_1,\ldots,A_N)$ is a simple isolated eigenvalue of $\mathscr{L}_p$, such that all other eigenvalues have absolute value strictly smaller than $\varrho_p(A_1,\ldots,A_N)$, such that 
\[\tr \mathscr{L}_p^n = \sum_{|\iii|=n}  \rho(A_\iii)^p \prod_{j=2}^d \left(1-\frac{\lambda_j(A_\iii)}{\lambda_1(A_\iii)}\right)=\sum_{|\iii|=n}\frac{\lambda_1(A_\iii)^{d-1}\rho(A_\iii)^p}{p_{A_\iii}'(\lambda_1(A_\iii))}\]
for every $n \geq 1$ and such that there exist constants $C_1,\gamma_1>0$ such that $\mathfrak{s}_n(\mathscr{L}_p) \leq C_1\exp( - \gamma_1 n^{\frac{1}{d-1}})$ for every $n\geq 1$. Define the sequence $(t_n)$ in accordance with Theorem \ref{th:main} and note that we have $t_n=\tr \mathscr{L}^n_p$ for every $n \geq 1$. For each $n \geq 0$ let $a_n$ be as defined in Theorem \ref{th:dets} and note that this coincides with the definition of the sequence $a_n$ in Theorem \ref{th:main}. We claim that there exist $C_2, \gamma_2>0$ such that
\begin{equation}\label{eq:est}|a_n|\leq C_2\exp\left(-\gamma_2 n^{\frac{d}{d-1}}\right)\end{equation}
for every $n \geq 1$. To see this let $n \geq 1$ and observe that by Theorem \ref{th:dets}
\begin{align*}|a_n| \leq \sum_{i_1<\cdots <i_n} \mathfrak{s}_{i_1}(\mathscr{L}_p)\cdots  \mathfrak{s}_{i_n}(\mathscr{L}_p)  &\leq \sum_{i_1<\cdots <i_n} \prod_{k=1}^n C_1\exp\left(-\gamma_1 i_k^{\frac{1}{d-1}}\right)\\
&=C_1^n \sum_{i_1<\cdots<i_n} \exp\left(-\gamma_1 \sum_{k=1}^n i_k^{\frac{1}{d-1}}\right)\\
&\leq C_1^n \sum_{i_1=1}^\infty \cdots \sum_{i_n=n}^\infty  \exp\left(-\gamma_1 \sum_{k=1}^n i_k^{\frac{1}{d-1}}\right)\\
&= C_1^n \prod_{k=1}^n \sum_{i_k=k}^\infty \exp\left(-\gamma_1 i_k^{\frac{1}{d-1}}\right)\\
&\leq C_1^nK^n \prod_{k=1}^n \exp\left(-\frac{\gamma_1}{2^{\frac{d}{d-1}}} k^{\frac{1}{d-1}}\right)\\
&= C_1^nK^n  \exp\left(-\frac{\gamma_1}{2^{\frac{d}{d-1}}} \sum_{k=1}^n k^{\frac{1}{d-1}}\right)\\
&\leq C_1^nK^n\exp\left(-\frac{(d-1)\gamma_1}{d2^{\frac{d}{d-1}}}  n^{\frac{d}{d-1}}\right)\end{align*}
where we have used Lemma \ref{le:sus} with $\alpha=\frac{1}{d-1}$ and have also used the elementary inequality
\[\sum_{k=1}^n k^{\frac{1}{d-1}} \geq \int_0^n t^{\frac{1}{d-1}} dt = \frac{d-1}{d}n^{\frac{d}{d-1}}\]
which is valid since the series is an upper Riemann sum of the integral. The claim follows easily. 

Now define a function $\mathscr{D} \colon \mathbb{C} \to \mathbb{C}$ by $\mathscr{D}(z):=\sum_{n=0}^\infty a_nz^n$. It is clear from the estimate \eqref{eq:est} that this power series has infinite radius of convergence and therefore $\mathscr{D}$ is a well-defined holomorphic function on $\mathbb{C}$. By Theorem \ref{th:dets} we have $\mathscr{D}(z)=\det(I-z\mathscr{L}_p)$ for all $z \in \mathbb{C}$ and the zeros of $\mathscr{D}$ are precisely the reciprocals of the nonzero eigenvalues of $\mathscr{L}_p$ with the degree of each zero being equal to the algebraic multiplicity of the corresponding eigenvalue. By Theorem \ref{th:opter}, $\varrho_p(A_1,\ldots,A_N)$ is the largest eigenvalue of $\mathscr{L}_p$ in absolute value and is a simple eigenvalue. It follows that we may choose a circular contour $\Gamma$ in $\mathbb{C}$ which is centred somewhere on the real line, passes through $0$, encloses $1/\varrho_p(A_1,\ldots,A_N)$ and does not enclose or intersect the reciprocal of any eigenvalue of $\mathscr{L}_p$ other than $\varrho_p(A_1,\ldots,A_N)$. Let $c \in \mathbb{R}$ and $R>0$ denote the centre point and radius of $\Gamma$ respectively. Since $\Gamma$ does not intersect the reciprocal of any eigenvalue of $\mathscr{L}_p$ the function $\mathscr{D}$ does not have any zeros on $\Gamma$, so by compactness
\[\inf_{|z-c|=R} |\mathscr{D}(z)|>0.\]
For each $n \geq 1$ define a function $\mathscr{D}_n \colon \mathbb{C} \to \mathbb{C}$ by $\mathscr{D}_n(z):=\sum_{k=0}^n a_kz^k$. Obviously each $\mathscr{D}_n$ is a polynomial and is therefore holomorphic on $\mathbb{C}$. Via Lemma \ref{le:sus} the estimate \eqref{eq:est} implies
\begin{align}\label{eq:beans}\sup_{|z-c|\leq R} \sum_{k=n}^\infty |a_kz^k| &\leq \sum_{k=n}^\infty C_2(2R)^k\exp\left(-\gamma_2 k^{\frac{d}{d-1}}\right)\\\nonumber
&\leq \sum_{k=n}^\infty C_3 \exp\left(-\gamma_3 k^{\frac{d}{d-1}}\right) \leq C_4 \exp\left(-\gamma_4 n^{\frac{d}{d-1}}\right)\end{align}
for all $n \geq 1$ and some suitable constants $C_3,C_4,\gamma_3,\gamma_4>0$. In particular
\begin{equation}\label{eq:amyfuckingmetcalfe}\lim_{n \to \infty} \sup\left\{|\mathscr{D}(z)-\mathscr{D}_n(z)|\colon |z-c|\leq R\right\}=0\end{equation}
and therefore there exists $n_0 \geq 1$ such that for all $n \geq n_0$
\[\sup_{|z-c|=R} |\mathscr{D}(z)-\mathscr{D}_n(z)|<\inf_{|z-c|=R}|\mathscr{D}(z)|.\]
Applying Rouch\'e's theorem on the circular contour $\Gamma$ we deduce that for all $n\geq n_0$ the functions $\mathscr{D}$ and $\mathscr{D}_n$ have the same number of zeros inside the contour $\Gamma$, and the total degree of the zeros inside $\Gamma$ is the same for the function $\mathscr{D}$ as it is for the function $\mathscr{D}_n$. Since $\mathscr{D}$ has a unique zero inside $\Gamma$ and that zero is simple this means that $\mathscr{D}_n$ has a unique zero inside $\Gamma$ for all large enough $n$, and this zero is simple. Call this zero $r_n$. Since $\mathscr{D}_n$ is a polynomial with real coefficients its zeros are symmetrically located with respect to reflection in the real axis. Since the contour $\Gamma$ is circular with real centre, a zero of $\mathscr{D}_n$ is enclosed by $\Gamma$ if and only if the complex conjugate of that zero is also so enclosed. It follows that the complex conjugate of $r_n$ is also enclosed by the contour $\Gamma$ and is therefore also a zero of $\mathscr{D}_n$. But $\mathscr{D}_n$ has a unique zero inside $\Gamma$. These statements can only be compatible if $r_n$ is equal to its own complex conjugate, and we conclude that $r_n$ is real. Since $r_n$ is enclosed by $\Gamma$ and is real it necessarily lies on the interval $(0,2R)$ and is the unique zero of $\mathscr{D}_n$ on that interval. In particular it is the smallest positive zero of the polynomial $\mathscr{D}_n$.

Define $r_\infty:=1/\varrho_p(A_1,\ldots,A_N) \in (0,2R)$. To complete the proof of the theorem we will show that
\[\left|\frac{1}{r_\infty}-\frac{1}{r_n}\right| = O\left(\exp\left(-\gamma_4 n^{\frac{d}{d-1}}\right)\right).\]
We first require a lower bound for the derivative $\mathscr{D}'(z)$ for $z$ close to $r_\infty$. Since $r_\infty= 1/\varrho_p(A_1,\ldots,A_N)$ is a simple zero of $\mathscr{D}$ we have $\mathscr{D}'(r_\infty) \neq 0$, and since it is also necessarily an isolated zero we may choose $\delta>0$ such that $|\mathscr{D}'(z)|\neq 0$ for all $z \in \mathbb{C}$ with $|z-r_\infty|\leq \delta$, such that $\mathscr{D}(z)\neq 0$ for all $z \in \mathbb{C}$ with $0<|z-r_\infty|\leq \delta$, and such that the closed disc of radius $\delta$ and centre $r_\infty$ is enclosed by the contour $\Gamma$. Since by compactness 
\[\inf_{|z-r_\infty|=\delta} |\mathscr{D}(z)|>0\]
it follows via \eqref{eq:amyfuckingmetcalfe} in the same manner as before that there exists $n_1 \geq n_0$ such that for all $n \geq n_1$
\[\sup_{|z-r_\infty|=\delta} |\mathscr{D}(z)-\mathscr{D}_n(z)|<\inf_{|z-r_\infty|=\delta}|\mathscr{D}(z)|.\]
Applying Rouch\'e's theorem again, this time to the circular contour with centre $r_\infty$ and radius $\delta$, we see that for each $n \geq n_1$ there is a unique zero of $\mathscr{D}_n$ within distance $\delta$ of $r_\infty$. Since the disc of radius $\delta$ and centre $r_\infty$ is enclosed by $\Gamma$, and $\Gamma$ encloses a unique zero of $\mathscr{D}_n$, we conclude that this zero must be $r_n$ and therefore $|r_n-r_\infty|<\delta$ for all $n \geq n_1$.

Now define
\[\kappa:=\inf\left\{|\mathscr{D}'(z)| \colon |z-r_\infty| \leq \delta\right\}>0.\]
Since $\mathscr{D}_n$ is a polynomial with real coefficients it takes only real values when restricted to $\mathbb{R}$ and therefore the same is true of $\mathscr{D}$ since it is the pointwise limit of $\mathscr{D}_n$ as $n \to \infty$. Let $n \geq n_1$ and suppose that $r_n \neq r_\infty$. By the Mean Value Theorem it follows that there exists a real number $t$ in the interval from $r_n$ to $r_\infty$ such that
\[\frac{\mathscr{D}(r_n)-\mathscr{D}(r_\infty)}{r_n-r_\infty} =\mathscr{D}'(t).\]
Since clearly $|r_\infty-t|\leq |r_\infty-r_n|\leq \delta$ we have $|\mathscr{D}'(t)|\geq \kappa$ and therefore
\[|r_n-r_\infty| \leq \kappa^{-1}|\mathscr{D}(r_n)-\mathscr{D}(r_\infty)|.\]
This inequality is obviously also true for integers $n \geq n_1$ such that $r_n=r_\infty$. In particular for all $n \geq n_1$ we have
\[|r_n-r_\infty| \leq \kappa^{-1}|\mathscr{D}(r_n)-\mathscr{D}_n(r_n)|\]
using the fact that $\mathscr{D}_n(r_n)=0=\mathscr{D}(r_\infty)$. Thus
\[|r_n-r_\infty| \leq \kappa^{-1}|\mathscr{D}(r_n)-\mathscr{D}_n(r_n)|=\kappa^{-1}\left|\sum_{k=n+1}^\infty a_kr_n^k\right| \leq \kappa^{-1}C_4 \exp\left(-\gamma_4 n^{\frac{d}{d-1}}\right) \]
for all $n \geq n_1$ using \eqref{eq:beans}. We in particular have $\lim_{n \to \infty} r_n=r_\infty$. If $n_2 \geq n_1$ is taken large enough that for all $n \geq n_2$ we have $r_n \geq \frac{1}{2}r_\infty$, then for all $n \geq n_2$ we have
\[\left|\frac{1}{r_n}-\frac{1}{r_\infty}\right| = \frac{|r_n-r_\infty|}{r_nr_\infty} \leq \frac{|r_n-r_\infty|}{\frac{1}{2}r_\infty^2}\leq \frac{2C_4}{\kappa r_\infty^2} \exp\left(-\gamma_4 n^{\frac{d}{d-1}}\right)\]
and this completes the proof of the theorem. 

\section{Example: a pair of matrices considered by Jungers and Protasov}\label{se:examples}
In the article \cite{JuPr11} the $p$-radius of the pair $(A_1,A_2)$ defined by
\[A_1:=\begin{pmatrix}\frac{1}{5}&0\\ \frac{1}{5}&\frac{3}{5} \end{pmatrix},\qquad A_2:=\begin{pmatrix}\frac{3}{5}&\frac{1}{5}\\ 0&\frac{1}{5} \end{pmatrix}\]
was investigated motivated by its connection with Chaikin's subdivision schemes and the $L^p$ regularity of refinable functions. The reader may easily check that if we define
\[X:=\begin{pmatrix}3&-1\\-1&3\end{pmatrix}\]
then the matrices $X^{-1}A_1X$ and $X^{-1}A_2X$ are both positive, so the pair $(A_1,A_2)$ strictly preserves a cone and Theorem \ref{th:main} may be applied thereto. The results of applying the various methods of estimation to $\varrho_{3.5}(A_1,A_2)$ are tabulated in Figures \ref{fi:gs}--\ref{fi:ghting} below. The reader will notice that by far the best results are those obtained by Theorem \ref{th:main}: the estimate obtained by evaluating all products $A_\iii$ of length up to 12 yields the estimate $0.19773 29868 07531 90957\ldots$ which is empirically accurate to all decimal places shown. Estimates of comparable complexity using the method of \S\ref{ss:jupr} give only the first two decimal places, albeit rigorously; the na{\"\i}ve upper and lower estimates described in \S\ref{ss:basic} are not even sufficient to establish the first significant digit of $\varrho_{3.5}(A_1,A_2)$. The methods of \S\ref{ss:van} and \S\ref{ss:eig} perform somewhat better, being able to give non-rigorous estimates accurate to several decimal places.  We also observe that the upper estimate arising from logarithmic convexity, 
\[\varrho_{3.5}(A_1,A_2) \simeq \sqrt{\varrho_3(A_1,A_2)\varrho_4(A_1,A_2)}=\sqrt{\rho\left(A_1^{\otimes 3}+A_2^{\otimes 3}\right)\rho\left(A_1^{\otimes 4}+A_2^{\otimes 4}\right)},\]
gives a rigorous upper bound of 
\[\varrho_{3.5}(A_1,A_2) \leq 0.19867 20360\ldots\]
which, remarkably, is more accurate than several of the other methods employed. Applying Theorem \ref{th:main} with $n=20$ gives the estimate
\[\varrho_{3.5}(A_1,A_2)\simeq 0.1977329868075319095734771033479503703640246341567\ldots\]
which is empirically accurate to all decimal places shown and provides the value of the estimate \eqref{eq:showoff} mentioned in the introduction.

\begin{figure}[H]
\begin{center}
\begin{tabular}{ccc} \toprule
    {$n$} & {Na\"{\i}ve upper estimate} & {Na\"{\i}ve lower estimate}  \\ \midrule
    1  & 0.41014 02388   &0.00003 71719 \\
    2  & 0.29717 45163   &0.00265 32644 \\
    3  & 0.26212 69438   &0.01107 32061  \\
    4  & 0.24497 10624  &0.02270 50356   \\
        5  & 0.23489 87259 &0.03497 48389 \\
6  & 0.22831 70520   &0.04666 81491 \\
7  & 0.22369 66328  &0.05735 30955  \\
8  & 0.22028 14135 &0.06694 70201\\
 9  & 0.21765 70884  &---\\
10  & 0.21557 86195  &---\\                           
    11 & 0.21389 22442   &---\\
    12 & 0.21249 67903  &---  \\ \bottomrule
\end{tabular}

\caption{The rigorous upper and lower estimates \eqref{eq:element-up} and \eqref{eq:element-down} applied to the pair $(A_1,A_2)$ with $p=3.5$. The upper estimate requires the computation of $2^n$ matrix products and the lower estimate $4^n$ products. For $n>8$ the lower estimate was omitted due to the large number of products to be computed and the poor quality of the estimates.}
\end{center}\label{fi:gs}
\end{figure}

\begin{figure}[H]
\begin{center}
\begin{tabular}{ccc} \toprule
{Mesh size} & Estimate\\ \midrule
10 & 0.22765 40788\\ 
100 & 0.19986 86395\\ 
1000 & 0.19785 78266\\ 
10000 &0.19774 13329\\ 
100000 & 0.19773 40963\\ 
\bottomrule
\end{tabular}
\caption{Estimates of $\varrho_{3.5}(A_1,A_2)$ given by the eigenvalue method described in \S\ref{ss:eig}.}
\end{center}
\end{figure}

\begin{figure}[H]
\begin{center}
\begin{tabular}{ccc} \toprule
    {Sample length} & {Number of runs} & {Resampled Monte Carlo estimate}  \\ \midrule
    10  & 10  & 0.20663 64774 \\
    10 & 100  &0.19336 14906 \\
    10  & 1000  &0.19472 39505 \\\midrule
    100  & 10  & 0.19078 48295   \\ 
    100 & 100    &0.19724 80647 \\
    100 & 1000  &0.19706 73206  \\ \midrule
        1000  & 10  &0.19171 01011   \\ 
    1000 & 100    &0.19752 20499\\
    1000 & 1000  &0.19768 32282 \\\midrule
   10000&10& 0.19460 13140\\
       10000&100& 0.19737 86045\\
           10000&1000& 0.19766 64507\\
     \bottomrule
\end{tabular}
\caption{Some representative instances of J. Vanneste's resampled Monte Carlo scheme applied to the pair $(A_1,A_2)$ with $p=3.5$ over various parameter ranges.}
\end{center}
\end{figure}

\begin{figure}
\begin{center}
\begin{tabular}{ccc} \toprule
    {$n$} & {Upper estimate $\mathsf{b}_p(n)^{\frac{1}{n}}$} & {Lower estimate $d^{-\frac{1}{n}}\mathsf{a}_p(n)^{\frac{1}{n}}$}  \\ \midrule
    1  & 0.20779 00346   &0.08095 43081 \\
    2  & 0.20474 70800  &0.14134 17665 \\
    3  & 0.20294 52224  &0.16241 04530  \\
    4  & 0.20180 54158   &0.17198 46647   \\
        5  & 0.20104 31937  &0.17732 22741 \\
6  & 0.20050 82647   &0.18073 86055 \\
7  & 0.20011 68386   &0.18313 47477  \\
8  & 0.19982 00191   &0.18492 14944 \\
 9  & 0.19958 80621  &0.18631 15004 \\
10  & 0.19940 21599   &0.18742 65582 \\                           
    11 & 0.19924 99839   &0.18834 21232 \\
    12 & 0.19912 31811  &0.18910 78446  \\ \bottomrule
\end{tabular}

\caption{Rigorous upper and lower estimates given by the algorithm of Jungers and Protasov applied to the pair $(A_1,A_2)$ with $p=3.5$.}
\end{center}
\end{figure}

\begin{figure}
\begin{center}
\begin{tabular}{cc} \toprule
    {$n$} & {Estimate $1/r_n$}  \\ \midrule
    1  & 0.50193 86416 68481 22831 92327 \\
    2  & --- \\
    3  & 0.25470 11941 19890 64296 65247 \\
    4  & ---  \\
        5  &0.19747 18486 52733 86575 36851 \\
6  & 0.19773 76208 73169 67676 89071\\
7  & 0.19773 30386 40809 03204 40047 \\
8  & 0.19773 29865 81371 43318 96314\\
 9  & 0.19773 29868 07433 20636 81181 \\
10  & 0.19773 29868 07532 62503 56803\\                           
    11 & 0.19773 29868 07531 90980 60910\\
    12 & 0.19773 29868 07531 90957 29023\\ \bottomrule
\end{tabular}

\caption{Estimates of $\varrho_{3.5}(A_1,A_2)$ provided by Theorem \ref{th:main}. For $n=2,4$ the polynomial $\sum_{k=0}^n a_kx^k$ has no real roots and the quantity $1/r_n$ is undefined.}
\end{center}\label{fi:ghting}
\end{figure}

\section{Conclusions}
We have introduced a new method for estimating the $p$-radius of low-cardinality sets of positive or dominated matrices and investigated its effectiveness in the case of a particular pair of matrices considered by Jungers and Protasov in connection with applications to Chaikin's subdivision scheme. We have compared its results to those of a number of other estimation methods in the case of that example and obtained results apparently accurate to within an absolute error of approximately $10^{-20}$, versus approximately $10^{-2}$ to $10^{-6}$ for rival methods. 

The new method has the disadvantage that the number of matrix products which must be computed in order to obtain the $n^{\mathrm{th}}$ approximation to $\varrho_p(A_1,\ldots,A_N)$ grows approximately as $N^n$. In particular if the number of matrices $N$ being considered is greater than around 4, the computational burden of producing accurate results may be prohibitively large. This disadvantage is however shared by the methods of \S\ref{ss:basic} and \S\ref{ss:jupr}. In view of this consideration, when $N$ is large the methods of \S\ref{ss:van} and \S\ref{ss:eig} may be preferable. Our method also, as presently formulated, does not provide a rigorous estimate of its own accuracy, and if rigorous bounds are sought then the method of \S\ref{ss:jupr}, possibly in combination with the logarithmic-convexity bound \eqref{eq:logconvex} may be applied instead. For two-dimensional positive matrices it seems likely that an effective bound on the error $|\varrho_p(A_1,\ldots,A_N)-1/r_n|$ could be given by adapting the arguments of \cite{JePo16,JuMo19}, but in higher dimensions this would require new technical results in order to bound the cardinality of the relative covers arising in the application of \cite[Theorem 4.7]{BaJe08} to the action of real linear maps on projective slices of complex cones.

\section{Acknowledgements}
This research was supported by the Leverhulme Trust (Research Project Grant number RPG-2016-194). \bibliographystyle{acm}
\bibliography{polecat}
\end{document}